\newtheorem{definition}{Definition}
\newtheorem{theorem}{Theorem}
\newtheorem{lemma}{Lemma}
\newtheorem{example}{Example}
\newtheorem{corollary}{Corollary}
\newtheorem{remark}{Remark}
\begin{document}
\title{\bf Exponentially stable solution of mathematical model based on graph theory of agents dynamics on time scales}

\author{Urszula Ostaszewska\footnote{University of Bialystok, Poland, email: uostasze@math.uwb.edu.pl},
Ewa Schmeidel\footnote{University of Bialystok, Poland, email: eschmeidel@math.uwb.edu.pl}, 
Ma\l gorzata Zdanowicz\footnote{University of Bialystok, Poland, email: mzdan@math.uwb.edu.pl}}

\date{\today}

\maketitle
\begin{abstract}
\noindent In this paper an emergence of leader-following model based on graph theory on the arbitrary time scales is investigated. It means that the step size is not necessarily constant but it is a function of time. We propose and prove conditions ensuring a leader-following consensus for any time scales using Gr\"{o}nwall inequality. The presented results are illustrated by examples.\\
\textbf{Keywords} Time scales, graph theory, leader-following problem, Gr\"{o}nwall inequality, multi-agent systems.\\
\textbf{AMS Subject classification} 34N05, 34D20, 93C10. 
\end{abstract}

%\linenumbers
%%%%%%%%%%%%%%%%%%%%%%%%%%%%%%%%%%%%%%
\section{Introduction} 
The aim of this paper is to propose the conditions ensuring  the consensus  of multi-agent system over the arbitrary time scale.  
We consider  continuous-time and discrete-time models and also models on time scales consisting of the both kinds of points: right-dense and right-scattered simultaneously. Under some assumptions, we prove that consensus can be achieved exponentially if the graininess function is bounded. All theorems are still true if graininess function approaches zero. Some existing results of discrete-time consensus are special cases of results presented in this paper. 

The leader-following problem has been investigated since 1970s. In 1974 \cite{DG1974}, DeGroot studied   explicitly described model that resulted in the consensus. In 2000, Krause \cite{Krause2000,HK2002} discussed the model of a group of agents who have to make a joint assessment of a certain magnitude. The coordination of groups of mobile autonomous agents based on the nearest neighbor rules was considered by  Jadbabaie et al. in \cite{Jadbabaie}. Blondel et al. \cite{Blondel2009,Blondel2010}, took into account  Krause's model with state-dependent connectivity. Girejko et al. \cite{GMMM,GMSZ2016}, examined Krause's model on discrete time scales. In 2007 there were published two important articles written by Cucker and Smale \cite{CS2007_1,CS2007_2}. The authors considered an emergent behavior in flocks. Cucker-Smale model on isolated time scales was in the area of interests of Girejko et al. \cite{GMSZ2016}. Last year, Girejko, Machado, Malinowska and Martins  \cite{GMMM2018} investigated the sufficient conditions for consensus in the Cucker-Smale type model on discrete time scales. In 2015, Wang et al. \cite{Wang} published some results for  the leader-following consensus of discrete time linear multi-agent systems with communication noises.  

Presented here results generalize and improve results obtained by the authors in \cite{OSZAIP2018} and \cite{osz2019}. In \cite{osz2019} consensus on different types of discrete time scales is considered under assumption that feedback control gain $\gamma$ is constant.

%%%%%%%%%%%%%%%%%%%%%%%%%%%%%%%%%%%%%%%%%%%%%%%%%%%%%%%%%%%%%%%%%%%%%%%%%%%%
%\section{Basis of time scales calculus}

%A time scale is a model of time, where the step size is a function of time. 
The background of time scales theory is given in Bohner and Peterson books~\cite{bookBohner2001,bookBohner2003}. %We just recall here Gr\"{o}nwall inequality which is used in the proof of the main result.
%%%%%%%%%%%%%%%%%%%%%%%%%%%%%%%%%

%%%%%%%%%%%%%%%%%%%%%%%%%%%%%%%%%
\section{Mathematical model of agents dynamics}
We consider a multi-agent system consisting of $N$ agents and the leader. The leader, labeled as $i=0$, is an isolated agent with the given trajectory $x_0 \colon \mathbb{T}\to \mathbb{R}^N$.
The dynamics of agents is described by the following equation 
\begin{equation}\label{eq5}
(x(t)-x_0(t))^\Delta(t)= F(t,x(t))-F(t,x_0(t)\mathds{1})- \gamma(t) B (x(t)-x_0(t)), 
\end{equation}
with initial condition $x(T_0)=x_{T_0}$.  Here $x \colon \mathbb{T}\to \mathbb{R}^N$ is unknown vector which represents the state of agents $x(t)=(x_1(t), \cdots, x_N(t))$ at time $t \in \mathbb{T}$, $F \colon \mathbb{T}\times\mathbb{R}^N \to \mathbb{R}^N$, $\mathds{1}$ is the vector $(1,\ldots,1)^T$, $\gamma\colon \mathbb{T} \to \mathbb{R}$ is the feedback control gain at time $t$, and $B$ is the symmetric matrix  (for details see \cite{S2019}). Notice that this model, based on the graph theory, was studied by many authors including Yu, Jiang and Hu~\cite{YJH2015}. 

Let us denote by $\varepsilon_i(t)=x_i(t)-x_0(t)$ the distance between the leader and the $i$-th agent.
If 
%begin{equation}\label{ewa13_5}
$(-\gamma(t) B)$ is regressive,
%end{equation}
then by $e_{-\gamma B}(t,T_0)$ we denote a solution of initial value problem 
\[
\varepsilon^\Delta(t)=- \gamma(t) B \varepsilon(t), \,\,\, \varepsilon(T_0)=\mathds{1}.
\]
By variation of constants (see \cite{bookBohner2001}), the unique solution of equation \eqref{eq5} is given~by
\begin{equation}\label{e2}
\varepsilon(t)= e_{-\gamma B}(t, T_0)\varepsilon_{T_0}
+\int\limits_{T_0}^{t}e_{-\gamma B}(t, \sigma(\tau))\big(F(\tau,x_0(\tau)\mathds{1})-F(\tau,x(\tau))\big) \Delta \tau.
\end{equation}

\begin{definition}
Function $F$ fulfills Lipschitz  condition with respect to the second variable if there exists a positive constant $\mathcal{L}$ such that
\begin{equation}\label{Lip}
\Vert F(t,x)-F(t,y) \Vert \leq \mathcal{L} \Vert x - y \Vert, \,\,\, t  \in \mathbb{T}.
\end{equation} 
\end{definition}

\begin{definition}%[\cite{PR2005}]
We say that equation \eqref{eq5}, where $T_0 \geq 0$, $\varepsilon_{T_0}=\varepsilon(T_0) \in \mathbb{R}^N$, is exponentially stable if there exist a positive constants $c$ and $d$ such that for any rd-continuous solution $\varepsilon(t, T_0, \varepsilon_{T_0})$ of equation \eqref{eq5} holds
\[
\lim\limits_{t \to \infty}\Vert \varepsilon(t, T_0, \varepsilon_{T_0} )\Vert = \colon\lim\limits_{t \to \infty}\Vert \varepsilon(t) \Vert \leq c \Vert \varepsilon_{T_0} \Vert \lim\limits_{t \to \infty} e_d(t, T_0)  =0.
\]
\end{definition}
For some relevant result for exponential stability in the discrete case see \cite{bms2016} and \cite{Elaydi}.
%\begin{definition} The multi-agent system \eqref{eq1}-\eqref{eq2} is said to be achieved the leader-following consensus exponentially if equation \eqref{eq5} is exponentially stable.
%\end{definition}
In 2005 \cite{PR2005}, Peterson and Raffoul investigated the exponential stability of the zero solution
to systems of dynamic equations on time scales. The authors defined suitable Lyapunov-type
functions and then formulated appropriate inequalities on these functions that guarantee
that the zero solution decay to zero exponentially. For the growth of generalized exponential functions on time scales see Bodine and Lutz \cite{BL2003}. 

%%%%%%%%%%%%%%%%%%%%%%%%%%%%%%%%%%%%%%%%%%%%%%%%%%%%%
%%%%%%%%%%%%%%%%%%%%%%%%%%%%%%%%%%%%%%%%%%%%%%%%%%%%%%%%
\section{Main results}
Through this paper we assume that
\[
\inf \mathbb{T} = T_0 \geq 0 \,\, \mbox{ and } \,\,\sup \mathbb{T} = \infty.
\]
It implies that $\mathbb{T}^\kappa = \mathbb{T}$. Assume that function $F$ %defined by \eqref{n1} 
satisfies Lipschitz  condition with respect to the second variable.

Let $\lambda_i$, $i=1,2,\dots,N$ denote the eigenvalues of matrix $B$. 
%%%%%%%%%%%%%%%%%%%%%%%%%%%%%%%%%%
By $\mathbb{T}^s$ and $\mathbb{T}^d$ we mean the set of right-scattered and right-dense points of $\mathbb{T}$, respectively. Notice that, since we assumed $\sup\mathbb{T}= \infty$, at least one of sets $\mathbb{T}^s$ or $\mathbb{T}^d$ must be unbounded.

Next, we rewrite time scale $\mathbb{T}$ in the useful way for estimation of norm of solution of initial value problem \eqref{eq5} on a time scale consisting of right-scattered as well as right-dense points. To avoid confusion we underline that any interval throughout this paper is an interval on the time scale, i.e. any interval contains only points of the time scale.
Set
\[
T_1 = \min\{t \colon t \in [T_0, \infty) \cap \mathbb{T}^d \mbox{ and }  [T_0,t) \subset \mathbb{T}^s \}
\] 
\[
T_{2i} = \inf \{t \colon t \in [T_{2i-1}, \infty) \cap \mathbb{T}^s \mbox{ and } [T_{2i-1},t) \subset \mathbb{T}^d\}
\]
\[
T_{2i+1} = \min\{t \colon t \in [T_{2i}, \infty) \cap \mathbb{T}^d \mbox{ and } [T_{2i},t) \subset \mathbb{T}^s\}
\]
for $i=1,2, \dots$. In case of $[T_{2i-1}, \infty) \cap \mathbb{T}^s = \emptyset$ for some  $i \in \mathbb{N}$ we take $T_{2i}=\infty$ (see Example \ref{Ex9}). If $[T_{2i-1}, t) \cap \mathbb{T}^d = [T_{2i-1}, T_{2i-1}) =\emptyset$ for some  $i \in \mathbb{N}$ we also take $T_{2i}=\infty$ (see Example \ref{Ex2}).   Analogously, if $[T_{2i}, \infty) \cap \mathbb{T}^d = \emptyset$ for some  $i \in \mathbb{N}$, then $T_{2i+1}=\infty$. If $T_j = \infty$ for some $j \in \mathbb{N}_0$, then we take $T_{j+i}=T_j$ for $i \in \mathbb{N}$ and $[T_{j+i}, T_{j+i+1}) = (T_{j+i}, T_{j+i+1}]=\emptyset$ (see Example \ref{Ex6}). We see if $\sigma(T_0)=T_0$, then $T_1=T_0$. 
\begin{example}
Let $\mathbb{T}=  \{1\} \cup [2,3] \cup [6, \infty)$. Here $T_0=1$, $T_1=2$, $T_2=3$, $T_3=6$ and $T_4=\infty$.
\end{example}
We underline that $T_{2i+1} \in \mathbb{T}^d$ for any $i \in \mathbb{N}_0$ while it is possible $T_{2i} \notin \mathbb{T}^s$ for some $i \in \mathbb{N}_0$.
\begin{example}
Let $\mathbb{T}=  \bigcup_{i=1}^{\infty}[2i-1, 2i] \cup \{4i + \frac{1}{j+1} \colon i, j \in \mathbb{N}\}$. Here $T_0=1$, $T_1=T_0$, $T_{i}=i$ for $i \in \{2,3, \dots \}$. We see $T_1=T_0 \in \mathbb{T}^d$, $T_{2i+1} \in \mathbb{T}^d$, $T_{4i} \in \mathbb{T}^d$, $T_{4i+2} \in \mathbb{T}^s$ for $i \in  \mathbb{N}$.
\end{example}
\begin{example}
Let $\mathbb{T}=  \bigcup_{i=1}^{\infty}[2i-1, 2i] \cup \{4i+1 - \frac{1}{j+1} \colon i, j \in \mathbb{N}\}$. Here $T_{0}=1$, $T_{i}=i$ for $i \in \mathbb{N}$ and $T_{2i-1} \in \mathbb{T}^d$ and $T_{2i} \in \mathbb{T}^s$ for $i \in  \mathbb{N}$.
\end{example}

We can write 
\[
\mathbb{T}= \{T_0\}\cup \bigcup\limits_{j=0}^{\infty}(T_j, T_{j+1}]= \{T_0\}\cup\bigcup\limits_{i=0}^{\infty}(T_{2i}, T_{2i+1}] \cup (T_{2i+1}, T_{2i+2}]
\]
wherein $(T_{2i}, T_{2i+1}) \subset \mathbb{T}^s$ and $(T_{2i+1}, T_{2i+2}) \subset \mathbb{T}^d$.

In the next lemma, for any $i \in  \mathbb{N}$, the estimations of the norm of matrices $e_{-\gamma B}(t,T_{2i}) $ where $t \in [T_{2i}, T_{2i+1})$, and $e_{-\gamma B}(t,T_{2i+1})$ where $t \in [T_{2i+1}, T_{2i+2})$ are presented.
\begin{lemma}\label{L1}
If for  $i = 1,2, \dots, N$ the following conditions are satisfied
\begin{equation}\label{ewa13_61}
\gamma(t)\lambda_i \in (0, \infty) \mbox{ for }t \in \mathbb{T},
\end{equation} 
\begin{equation}\label{ewa13_7}
0< \delta \leq \mu(t) \gamma(t) \lambda_i <1 \mbox{ for } t \in \mathbb{T}^s,  \mbox{ where }\delta \mbox{ is a  constant, }
\end{equation}
then there exists a positive real number $\mathcal{M}<1$ such that 
\[%begin{equation}\label{ewa12}
\Vert e_{-\gamma B}(t,T_{2i}) \Vert \leq \prod\limits_{s \in [T_{2i}, t)} \mathcal{M} \mbox{ for }t \in [T_{2i}, T_{2i+1}),\,\,i \in \mathbb{N}_0,
\]%end{equation} 
\[
\Vert e_{-\gamma B}(t,T_{2i+1}) \Vert \leq \mathcal{M}^{\int_{T_{2i+1}}^{t}\vert \gamma(s) \vert ds } \mbox{ for }t \in [T_{2i+1}, T_{2i+2}), \,\,i \in \mathbb{N}_0,
\]
where $\Vert \cdot \Vert$ denotes the spectral norm of considered matrix at the point $t$.
\end{lemma}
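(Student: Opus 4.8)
The plan is to exploit the fact that every matrix $-\gamma(t)B$ is a scalar multiple of the single fixed symmetric matrix $B$, hence all these matrices are simultaneously diagonalizable by one constant orthogonal matrix. Writing $B=Q\Lambda Q^{T}$ with $Q$ orthogonal and $\Lambda=\operatorname{diag}(\lambda_1,\dots,\lambda_N)$, I would substitute $Z(t)=Q^{T}e_{-\gamma B}(t,s)Q$ into the defining dynamic equation $Y^{\Delta}=-\gamma(t)B\,Y$, $Y(s)=I$. Because $Q$ is constant this gives $Z^{\Delta}=-\gamma(t)\Lambda\,Z$, $Z(s)=I$, which decouples into $N$ scalar initial value problems solved by $e_{-\gamma\lambda_i}(t,s)$. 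Therefore $e_{-\gamma B}(t,s)=Q\,\operatorname{diag}\big(e_{-\gamma\lambda_1}(t,s),\dots,e_{-\gamma\lambda_N}(t,s)\big)Q^{T}$, a symmetric matrix whose spectral norm equals $\max_{i}|e_{-\gamma\lambda_i}(t,s)|$. I would also note in passing that \eqref{ewa13_7} yields $1-\mu(t)\gamma(t)\lambda_i>0$, so $-\gamma B$ is regressive and each scalar exponential is well defined.

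Next I would compute the scalar exponentials on the two kinds of subintervals. On $[T_{2i},T_{2i+1})$, where $(T_{2i},T_{2i+1})\subset\mathbb{T}^{s}$, the time-scale exponential collapses to the product $e_{-\gamma\lambda_i}(t,T_{2i})=\prod_{s\in[T_{2i},t)}(1-\mu(s)\gamma(s)\lambda_i)$, with the convention that a right-dense point contributes the trivial factor $1$. By \eqref{ewa13_7} each genuine factor satisfies $0<1-\mu(s)\gamma(s)\lambda_i\le 1-\delta$, and since every factor is positive the whole product, and hence $\max_i$ of it, is bounded above by $\prod_{s\in[T_{2i},t)}(1-\delta)$. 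On $[T_{2i+1},T_{2i+2})$, where the points are right-dense, $\mu\equiv 0$ and the $\Delta$-integral becomes an ordinary integral, so $e_{-\gamma\lambda_i}(t,T_{2i+1})=\exp\big(-\lambda_i\int_{T_{2i+1}}^{t}\gamma(s)\,ds\big)$.

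To turn the right-dense expression into the required form I would use \eqref{ewa13_61}: since $\gamma(t)\lambda_i>0$ for all $i$ and all $t$, the function $\gamma$ keeps a constant sign equal to the common sign of the $\lambda_i$, whence $\lambda_i\int_{T_{2i+1}}^{t}\gamma(s)\,ds=|\lambda_i|\int_{T_{2i+1}}^{t}|\gamma(s)|\,ds\ge 0$. Setting $\lambda_{\min}:=\min_i|\lambda_i|>0$ gives $\max_i|e_{-\gamma\lambda_i}(t,T_{2i+1})|=\exp\big(-\lambda_{\min}\int_{T_{2i+1}}^{t}|\gamma(s)|\,ds\big)=\big(e^{-\lambda_{\min}}\big)^{\int_{T_{2i+1}}^{t}|\gamma(s)|\,ds}$. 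Finally I would put $\mathcal{M}=\max\{1-\delta,\,e^{-\lambda_{\min}}\}$; then $\mathcal{M}<1$ because $\delta>0$ and $\lambda_{\min}>0$, while $\mathcal{M}\ge 1-\delta$ and $\mathcal{M}\ge e^{-\lambda_{\min}}$ (together with $\int|\gamma|\ge 0$) make the two claimed estimates follow from the bounds above. The step I expect to be the main obstacle is the rigorous justification of the simultaneous-diagonalization identity for the matrix exponential on a time scale and the attendant reduction of the spectral norm to $\max_i|e_{-\gamma\lambda_i}|$; once this is established, the two interval estimates are routine applications of the product and integral representations of the exponential together with \eqref{ewa13_61}–\eqref{ewa13_7}.
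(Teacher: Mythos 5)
Your proof is correct and follows essentially the same route as the paper: split $\mathbb{T}$ into right-scattered and right-dense stretches, reduce the spectral norm of $e_{-\gamma B}$ to the eigenvalues of $B$ via symmetry, bound each scattered factor by $1-\delta$ using \eqref{ewa13_7}, bound the dense part by a power of $e^{-\lambda}$ using \eqref{ewa13_61}, and take $\mathcal{M}$ as the maximum of the two constants. Your simultaneous-diagonalization step (the one you flag as the main obstacle) is just the explicit justification of what the paper does directly, namely writing $e_{-\gamma B}(t,s)$ as the product $\prod(I-\mu\gamma B)$ on scattered points and as $e^{-B\int\gamma}$ on dense intervals and reading off eigenvalues; it goes through without difficulty because $Q$ is constant. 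One substantive difference is worth noting: on the right-dense intervals you compute the spectral norm correctly as $\max_i e^{-\lambda_i\int\gamma}=\exp\bigl(-\min_i\vert\lambda_i\vert\int_{T_{2i+1}}^{t}\vert\gamma(s)\vert\,ds\bigr)$, so your constant is $e^{-\min_i\vert\lambda_i\vert}$. The paper instead uses the identity $\Vert e^{-B\int\gamma}\Vert=\Vert e^{B}\Vert^{-\int\gamma}$, which is not valid for matrices: in the case $\lambda_i>0$, $\gamma>0$ it produces $\bigl(\max_i e^{\lambda_i}\bigr)^{-1}=e^{-\max_i\lambda_i}$, an underestimate of the true norm, so the paper's $\mathcal{M}^{**}$ in that case is too small to serve as an upper bound. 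Your value $e^{-\min_i\vert\lambda_i\vert}$ is the right one, and it is in fact the constant the paper's own examples use (e.g.\ $e^{-0.585}$ with $\lambda_{\min}\approx 0.585$).
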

\begin{proof}
Obviously, $\mathbb{T}^s \cup \mathbb{T}^d =\mathbb{T}$.
We consider two cases:
\begin{itemize}
\item[\textit{(i)}]
$t\in \mathbb{T}^s$; %point $t \in \mathbb{T}$ is a right-scattered;
\item[\textit{(ii)}]
$t\in \mathbb{T}^d$. %point $t \in \mathbb{T}$ is a right-dense.
\end{itemize}
In case $(i)$, notice that since matrix $B$ is symmetric, then $I- \mu(t)\gamma(t) B$ is a symmetric matrix at the point $t$, too. Therefore $\Vert I- \mu(t)\gamma(t) B \Vert$ equals the maximum of the absolute value of eigenvalues of matrix  $I- \mu(t)\gamma(t) B$. It means
\[
\Vert e_{-\gamma B}(t,T_{2i}) \Vert = \prod\limits_{s \in (T_{2i}, t]} \!\!\! \Vert I- \mu(s)\gamma(s) B \Vert= \!\!\! \prod\limits_{s \in (T_{2i}, t]} (\max\limits_{i \in \{1,2, \dots, N\}}\{\vert 1- \mu(s)\gamma(s) \lambda_i \vert \})
\]
for $t \in [T_{2i}, T_{2i+1})$.
Because of positivity of $\mu$ on $\mathbb{T}^s$ and condition \eqref{ewa13_61}, we have $\vert \mu(s)\gamma(s) \lambda_i \vert = \mu(s)\gamma(s) \lambda_i $. Moreover, by \eqref{ewa13_7}, $\mu(s)\gamma(s) \lambda_i \in (0,1)$ for $i \in \{1,2, \dots, N\}$. We can conclude
\[
\Vert e_{-\gamma B}(t,T_{2i}) \Vert = \prod\limits_{s \in (T_{2i}, t]}(1-\min\limits_{i \in \{1,2, \dots, N\}}\{ \mu(s)\gamma(s) \lambda_i  \}).
\]
Again by \eqref{ewa13_7}, we have  
\[
-1< -\min\limits_{i \in \{1,2, \dots, N\}} \mu(s) \gamma(s) \lambda_i \leq -\delta  <0.
\]
From above 
\[
\Vert e_{-\gamma B}(t,T_{2i}) \Vert \leq \prod\limits_{s \in (T_{2i}, t]}(1-\delta)  = \prod\limits_{s \in (T_{2i}, t]} \mathcal{M}^*  = \prod\limits_{s \in [T_{2i}, t)} \mathcal{M}^* \mbox{ for }t \in [T_{2i}, T_{2i+1}),
\]
where $\mathcal{M}^* \colon = 1-\delta \in (0,1)$. \\
Case $(ii)$. Condition \eqref{ewa13_61} implies 

$(1^o)$ $\lambda_i>0$ for any $i=1,2,\dots, N$ and $\gamma(t)>0$ for any $t \in \mathbb{T}^d$ 

or 

$(2^o)$  $\lambda_i<0$ for any $i=1,2,\dots, N$ and $\gamma(t)<0$ for any $t \in \mathbb{T}^d$. \\
If $(1^o)$, then
\[
\Vert e_{-\gamma B}(t,T_{2i+1}) \Vert=\Vert e^{ B }\Vert^{-\int_{T_{2i+1}}^{t}\gamma(s)ds} = (\max\limits_{i \in \{1,2, \dots, N\}} \{e^{  \lambda_i }\})^{ - \int_{T_{2i+1}}^{t}\gamma(s)ds } 
\]
\[
= (\mathcal{M}^{**})^{\int_{T_{2i+1}}^{t}\gamma(s)ds }= (\mathcal{M}^{**})^{\int_{T_{2i+1}}^{t} \vert \gamma(s) \vert ds }
\]
for $t \in [T_{2i+1}, T_{2i+2})$, where $\mathcal{M}^{**}\colon = (\max\limits_{i \in \{1,2, \dots, N\}} \{e^{  \lambda_i }\})^{ - 1} \in(0,1)$.\\
If $(2^o)$, then
\[
\Vert e_{-\gamma B}(t,T_{2i+1}) \Vert=\Vert e^{ B }\Vert^{-\int_{T_{2i+1}}^{t}\gamma(s)ds} = (\max\limits_{i \in \{1,2, \dots, N\}} \{e^{  \lambda_i }\})^{ - \int_{T_{2i+1}}^{t}\gamma(s)ds } 
\]
\[
= (\mathcal{M}^{**})^{-\int_{T_{2i+1}}^{t}\gamma(s)ds }= (\mathcal{M}^{**})^{\int_{T_{2i+1}}^{t}\vert \gamma(s) \vert ds }
\]
for $t \in [T_{2i+1}, T_{2i+2})$, where $\mathcal{M}^{**}\colon = \max\limits_{i \in \{1,2, \dots, N\}} \{e^{  \lambda_i }\} \in(0,1)$.

Set $\mathcal{M}\colon= \max\{\mathcal{M}^{*}, \mathcal{M}^{**}\}$. Obviously $\mathcal{M}\in(0,1)$.\\
Hence $\Vert e_{-\gamma B}(t,T_{2i}) \Vert \leq  \prod\limits_{s \in [T_{2i}, t)} \mathcal{M}$ for $t \in [T_{2i}, T_{2i+1})$ \\and $\Vert e_{-\gamma B}(t,T_{2i+1}) \Vert \leq \mathcal{M}^{\int_{T_{2i+1}}^{t}\vert \gamma(s) \vert ds }$ for $t \in [T_{2i+1}, T_{2i+2})$.
\end{proof}
%%%%%%%%%%%%%%%%%%%%%%%%%%%%%%%%%%%%%%%%%%%
%%%%%%%%%%%%%%%%%%%%%%%%%%%%%%%%%%%%%%%%55 
Next, we find the estimations of the norm of matrix $e_{-\gamma B}(t,T_0) $ in two cases: $t \in [T_{2i}, T_{2i+1})$ and $t \in [T_{2i+1}, T_{2i+2})$.
%??? 
\begin{lemma}\label{L3}
If conditions \eqref{ewa13_61}-\eqref{ewa13_7} are satisfied,
then 
\begin{equation}\label{t1}
\Vert e_{-\gamma B}(t,T_{0}) \Vert \leq \big(\mathcal{M}^{\sum_{j=1}^{i}{\int_{T_{2j-1}}^{T_{2j}}}\vert \gamma(s) \vert ds}\big)  \big(\prod\limits_{s \in [T_{0}, t) \cap \mathbb{T}^s} \mathcal{M} \big) 
\end{equation}
 for $ t \in [T_{2i}, T_{2i+1})$, and
\begin{equation}\label{t2}
 \Vert e_{-\gamma B}(t,T_{0}) \Vert \leq \big( \prod\limits_{s \in [T_0,T_{2i+1}) \cap \mathbb{T}^s} \mathcal{M} \big)\big(\mathcal{M}^{\sum_{j=1}^{i}{\int_{T_{2j-1}}^{T_{2j}}\vert \gamma(s) \vert ds}+{\int_{T_{2i+1}}^{t}\vert \gamma(s) \vert ds}}\big)
\end{equation}
for $t \in [T_{2i+1}, T_{2i+2})$, where $i \in \mathbb{N}_0$.
\end{lemma}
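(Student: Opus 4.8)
The plan is to reduce the global estimate on $[T_0, t)$ to the local estimates of Lemma \ref{L1} by exploiting the cocycle (semigroup) property of the time-scale transition matrix, $e_{-\gamma B}(t, T_0) = e_{-\gamma B}(t, r)\, e_{-\gamma B}(r, T_0)$ for $T_0 \le r \le t$, together with submultiplicativity of the spectral norm, $\Vert AB \Vert \le \Vert A \Vert \, \Vert B \Vert$. (Here the order of the factors is harmless, since every matrix involved is a polynomial in $B$ and the factors commute.)

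First I would invoke the decomposition of $\mathbb{T}$ recorded just before the lemma, writing $[T_0, t) = [T_0, T_1) \cup [T_1, T_2) \cup \cdots \cup [T_{2i}, t)$ in the first case (and analogously ending with $[T_{2i+1}, t)$ in the second), where each block $[T_{2j}, T_{2j+1})$ lies in $\mathbb{T}^s$ and each block $[T_{2j+1}, T_{2j+2})$ lies in $\mathbb{T}^d$. Iterating the cocycle identity across the breakpoints $T_1, T_2, \dots, T_{2i}$ presents $e_{-\gamma B}(t, T_0)$ as an ordered product of the factors $e_{-\gamma B}(T_{j+1}, T_j)$, the last one being $e_{-\gamma B}(t, T_{2i})$ (resp.\ $e_{-\gamma B}(t, T_{2i+1})$), and submultiplicativity turns this into a product of their norms.

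Then I apply Lemma \ref{L1} factor by factor. Each scattered block contributes $\prod_{s \in [T_{2j}, T_{2j+1})} \mathcal{M}$ and each dense block contributes $\mathcal{M}^{\int_{T_{2j+1}}^{T_{2j+2}} \vert \gamma(s) \vert \, ds}$; the final (possibly partial) block is bounded the same way with $t$ in place of its right endpoint. Collecting the scattered contributions merges all the finite products into the single product over $[T_0, t) \cap \mathbb{T}^s$, while collecting the dense contributions adds the exponents, producing $\sum_{j=1}^{i} \int_{T_{2j-1}}^{T_{2j}} \vert \gamma(s) \vert \, ds$ (plus the tail $\int_{T_{2i+1}}^{t} \vert \gamma(s) \vert \, ds$ in the second case). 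This is exactly \eqref{t1}, resp.\ \eqref{t2}. An equivalent route is induction on $i$, splitting $e_{-\gamma B}(t, T_0) = e_{-\gamma B}(t, T_{2i})\, e_{-\gamma B}(T_{2i}, T_0)$ and feeding the inductive bound into the second factor.

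The main point to watch is the endpoint and index bookkeeping rather than any analytic difficulty: I must verify that each breakpoint $T_j$ is assigned to exactly one block, so that no point of $\mathbb{T}^s$ is double-counted and the half-open conventions of Lemma \ref{L1} glue consistently. In particular, since $T_{2j+1} \in \mathbb{T}^d$ one has $\mu(T_{2j+1}) = 0$, so evaluating the scattered estimate at the closed right endpoint $T_{2j+1}$ costs nothing (the boundary factor equals $\Vert I \Vert = 1$), which is precisely what lets the bound of Lemma \ref{L1}, stated on the half-open $[T_{2j}, T_{2j+1})$, extend to the full factor $e_{-\gamma B}(T_{2j+1}, T_{2j})$. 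Finally I would note that the degenerate cases in which some $T_j = \infty$ and the subsequent blocks are empty are covered automatically, since the empty products and empty integrals equal $1$ and $0$ respectively.
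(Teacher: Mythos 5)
Your proposal is correct and follows essentially the same route as the paper: the authors likewise factor $e_{-\gamma B}(t,T_0)$ across the breakpoints $T_1,\dots,T_{2i}$ (the cocycle property written out explicitly as an ordered product of scattered-block and dense-block factors), apply submultiplicativity of the spectral norm, bound each factor exactly as in Lemma \ref{L1}, and then merge the scattered products into $\prod_{s\in[T_0,t)\cap\mathbb{T}^s}\mathcal{M}$ and add the exponents of the dense blocks. Your extra care about the half-open endpoint conventions and the degenerate cases $T_j=\infty$ is a welcome tightening of bookkeeping the paper glosses over, but it does not change the argument.
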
 
\begin{proof}
Let us rewrite function  $e_{-\gamma B}(t,T_{0})$ in the following form
\[
 e_{-\gamma B}(t,T_{0})= \big(\prod\limits_{s \in [T_0,T_1)}( I- \mu(s)\gamma(s) B )\big) \big( e^{ -B \int_{T_1}^{T_2}\gamma(s)ds} \big)
 \]
 \[
\cdot \big(\prod\limits_{s \in [T_2,T_3)}( I- \mu(s)\gamma(s) B )\big) \big( e^{ -B \int_{T_3}^{T_4}\gamma(s)ds} \big) 
 \]
 \[
 \cdots
 \]
 \[
\cdot \big(\prod\limits_{s \in [T_{2i-2},T_{2i-1})}( I- \mu(s)\gamma(s) B )\big) \big( e^{ -B \int_{T_{2i-1}}^{T_{2i}}\gamma(s)ds} \big) 
 \]
  \[
\cdot \big(\prod\limits_{s \in [T_{2i},t]}( I- \mu(s)\gamma(s) B )\big) \mbox{ for }t \in [T_{2i},T_{2i+1})
 \]
or
\[
 e_{-\gamma B}(t,T_{0})= \big(\prod\limits_{s \in [T_0,T_1)}( I- \mu(s)\gamma(s) B )\big) \big( e^{ -B \int_{T_1}^{T_2}\gamma(s)ds} \big)
 \]
 \[
\cdot \big(\prod\limits_{s \in [T_2,T_3)}( I- \mu(s)\gamma(s) B )\big) \big( e^{ -B \int_{T_3}^{T_4}\gamma(s)ds} \big) 
 \]
 \[
 \cdots
 \]
 \[
\cdot \big(\prod\limits_{s \in [T_{2i-2},T_{2i-1})}( I- \mu(s)\gamma(s) B )\big) \big( e^{ -B \int_{T_{2i-1}}^{T_{2i}}\gamma(s)ds} \big) 
 \]
  \[
\cdot \big(\prod\limits_{s \in [T_{2i},T_{2i+1)]}}( I- \mu(s)\gamma(s) B )\big) \big( e^{ -B \int_{T_{2i+1}}^{t}\gamma(s)ds} \big) \mbox{ for }t \in [T_{2i+1},T_{2i+2}).
 \]
 By submultiplicativity of the norm, for $t \in [T_{2i},T_{2i+1})$ we estimate the norm of matrix $  e_{-\gamma B}(t,T_{0})$
 \[
 \Vert e_{-\gamma B}(t,T_{0}) \Vert \leq \big(\prod\limits_{s \in [T_0,T_1)}\Vert I- \mu(s) \gamma(s) B \Vert \big) \big(\Vert e^{ B }\Vert^{-\int_{T_1}^{T_2} \vert \gamma(s) \vert ds}\big)
 \]
  \[
\cdot \big(\prod\limits_{s \in [T_2,T_3)}\Vert I- \mu(s)\gamma(s) B \Vert \big) \big(\Vert e^{ B }\Vert^{-\int_{T_3}^{T_4} \vert \gamma(s) \vert ds}\big)  
\]
\[
\cdots
\]
\[
 \cdot  \big(\prod\limits_{s \in [T_{2i-2},T_{2i-1})}\Vert I- \mu(s)\gamma(s) B \Vert \big) \big( \Vert e^{ B }\Vert^{-\int_{T_{2i-1}}^{T_{2i}} \vert \gamma(s) \vert ds} \big)  
\]
\[
\cdot \big(\prod\limits_{s \in [T_{2i},t)}\Vert I- \mu(s)\gamma(s) B \Vert \big) 
 \]
 \[
 \leq \big( \prod\limits_{s \in [T_0,T_1)} \mathcal{M} \big)   \big(\mathcal{M}^{\int_{T_1}^{T_2}\vert \gamma(s) \vert ds  }\big)   \big( \prod\limits_{s \in [T_2,T_3)} \mathcal{M} \big)    \big(\mathcal{M}^{\int_{T_3}^{T_4}\vert \gamma(s)\vert ds}\big) \dots
\]
\[
\cdot  \big(\mathcal{M}^{\int_{T_{2i-1}}^{T_{2i}}\vert \gamma(s)\vert ds}\big) \big( \prod\limits_{s \in [T_{2i},t)} \mathcal{M} \big)
 \]
 \[
 = \big( \prod\limits_{s \in [T_0,t) \cap \mathbb{T}^s} \mathcal{M} \big)\big(\mathcal{M}^{\int_{T_1}^{T_2}\vert \gamma(s)\vert ds + \int_{T_3}^{T_4}\vert \gamma(s)\vert ds  + \dots + \int_{T_{2i-1}}^{T_{2i}}\vert \gamma(s)\vert ds }\big)  
 \]
  \[
 = \big(\mathcal{M}^{\sum_{j=1}^{i}{\int_{T_{2j-1}}^{T_{2j}}\vert \gamma(s) \vert ds}} \big)  \big( \prod\limits_{s \in [T_0,t) \cap \mathbb{T}^s} \mathcal{M} \big) 
 \]
Analogously, for $t \in [T_{2i+1},T_{2i+2})$, we get
\[
 \Vert e_{-\gamma B}(t,T_{0}) \Vert \leq \big( \prod\limits_{s \in [T_0,T_{2i+1}) \cap \mathbb{T}^s} \mathcal{M} \big)\big(\mathcal{M}^{\sum_{j=1}^{i}{\int_{T_{2j-1}}^{T_{2j}}\vert \gamma(s) \vert ds}+{\int_{T_{2i+1}}^{t}\vert \gamma(s) \vert ds}}\big). 
 \]
\end{proof}

%%%%%%%%%%%%%%%%%%%%%%%%%%%%%%%%%%%%%%
%%%%%%%%%%%%%%%%%%%%%%%%%%%%%%%%%%%%%%%%%%%%%%%%%%%%%%%%%%
\begin{remark}\label{R1}
If conditions \eqref{ewa13_61}-\eqref{ewa13_7} are satisfied,
then 
\[
\Vert e_{-\gamma B}(t,T_{0}) \Vert \leq \big(\prod\limits_{s \in [T_{0}, t) \cap \mathbb{T}^s} \mathcal{M}  \big) \mbox{ for } t \in \mathbb{T}.
\]
\end{remark} 
\begin{proof}
Since $\mathcal{M} \in (0,1)$ and $\sum_{j=1}^{i}{\int_{T_{2j-1}}^{T_{2j}}\vert \gamma(s) \vert ds}+\int_{T_{2i+1}}^{t}\vert \gamma(s) \vert ds \geq 0$, thus 
\[
\mathcal{M}^{\sum_{j=1}^{i}{\int_{T_{2j-1}}^{T_{2j}}\vert \gamma(s) \vert ds}+{\int_{T_{2i+1}}^{t}\vert \gamma(s) \vert ds}} \leq 1 \mbox{  for }t \in \mathbb{T}.
\]
From the above, inequalities \eqref{t1} and \eqref{t2} imply
\[
\Vert e_{-\gamma B}(t,T_{0}) \Vert \leq \big(\prod\limits_{s \in [T_{0}, t) \cap \mathbb{T}^s} \mathcal{M} \big)   \mbox{ for } t \in [T_{2i}, T_{2i+1}),
\]
\[
 \Vert e_{-\gamma B}(t,T_{0}) \Vert \leq \big( \prod\limits_{s \in [T_0,T_{2i+1}) \cap \mathbb{T}^s} \mathcal{M} \big)   \mbox{ for } t \in [T_{2i+1}, T_{2i+2}),
\]
where $i \in \mathbb{N}_0$. This is our claim.
\end{proof}

%%%%%%%%%%%%%%%%%%%%%%%%%%%%%%%%%%%%%%%%
The following result concerns of scalar case of exponential function on arbitrary time scale.
\begin{lemma}\label{L4}
Let $e_{\mathcal{L} \mathcal{M}^{-1}}( \cdot, T_0) \colon \mathbb{T} \to \mathbb{R}$. There hold
\[
 e_{\mathcal{L} \mathcal{M}^{-1}}( t, T_0) 
 = e^{\mathcal{L} \mathcal{M}^{-1}\sum_{j=1}^{i}  (T_{2j}-T_{2j-1})}\!\!\!\prod\limits_{s \in [T_{0}, t) \cap \mathbb{T}^s} \!\!\!(1+ \mu(s)\mathcal{L} \mathcal{M}^{-1})
\] 
 for $t \in [T_{2i}, T_{2i+1})$, and
\[
 e_{\mathcal{L} \mathcal{M}^{-1}}( t, T_0)  =  \!\!\!\!\!\!\!\!\! \prod\limits_{s \in [T_0,T_{2i+1}) \cap \mathbb{T}^s}  \!\!\!\!\!\!\!\!\!\!\!\! (1+ \mu(s)\mathcal{L} \mathcal{M}^{-1})\cdot e^{\mathcal{L} \mathcal{M}^{-1}(\sum_{j=1}^{i}  (T_{2j}-T_{2j-1})+(t-T_{2j+1}))}
\]
 for $t \in [T_{2i+1}, T_{2i+2})$,
where $i \in \mathbb{N}_0$.
\end{lemma}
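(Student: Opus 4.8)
The plan is to compute $e_{\mathcal{L}\mathcal{M}^{-1}}(t,T_0)$ directly by exploiting the multiplicative (semigroup) property of the time scale exponential function together with the alternating right-scattered/right-dense decomposition $\mathbb{T}=\{T_0\}\cup\bigcup_{i=0}^{\infty}(T_{2i},T_{2i+1}]\cup(T_{2i+1},T_{2i+2}]$ established above. Write $\alpha:=\mathcal{L}\mathcal{M}^{-1}$; since $\mathcal{M}\in(0,1)$ and $\mathcal{L}>0$ we have $\alpha>0$. By definition $e_{\alpha}(\cdot,T_0)$ is the unique solution of the scalar initial value problem $y^{\Delta}=\alpha y$, $y(T_0)=1$, and the identity $e_{\alpha}(t,T_0)=e_{\alpha}(t,r)\,e_{\alpha}(r,T_0)$ (see \cite{bookBohner2001}) lets me treat each block of the decomposition separately and then multiply.

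First I would record the two elementary building blocks. On the right-scattered part, at each $s\in\mathbb{T}^s$ one has $\mu(s)>0$ and the local factor is $e_{\alpha}(\sigma(s),s)=1+\mu(s)\alpha$; iterating over a right-scattered interval therefore gives $\prod_{s\in[T_{2i},T_{2i+1})\cap\mathbb{T}^s}(1+\mu(s)\alpha)$. On the right-dense part, every point of $(T_{2i+1},T_{2i+2})$ has $\mu\equiv 0$, so there the dynamic equation collapses to the ordinary differential equation $y'=\alpha y$, whose solution over $[T_{2i+1},t)$ contributes the classical factor $e^{\alpha(t-T_{2i+1})}$; over a fully traversed right-dense interval $(T_{2j-1},T_{2j})$ this factor is $e^{\alpha(T_{2j}-T_{2j-1})}$. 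This is exactly the scalar analogue of the block structure used in Lemma~\ref{L3}, now with the constant $\alpha$ in place of the matrix $-\gamma B$.

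Next I would assemble the blocks. For $t\in[T_{2i},T_{2i+1})$ the passage from $T_0$ to $t$ crosses the right-dense intervals $(T_1,T_2),\dots,(T_{2i-1},T_{2i})$ in full and collects all right-scattered points in $[T_0,t)\cap\mathbb{T}^s$; multiplying the corresponding factors and using $\prod_{j=1}^{i}e^{\alpha(T_{2j}-T_{2j-1})}=e^{\alpha\sum_{j=1}^{i}(T_{2j}-T_{2j-1})}$ yields the first claimed identity. For $t\in[T_{2i+1},T_{2i+2})$ one additionally traverses the current right-dense interval up to $t$, contributing the extra factor $e^{\alpha(t-T_{2i+1})}$, while the right-scattered factors are precisely those indexed by $[T_0,T_{2i+1})\cap\mathbb{T}^s$; collecting these gives the second identity.

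The step I expect to require the most care is the bookkeeping at the transition points $T_j$. Since a point $T_{2i}$ need not lie in $\mathbb{T}^s$ while $T_{2i+1}\in\mathbb{T}^d$ always holds, I must verify that each right-scattered point is counted exactly once in $\prod_{s\in[T_0,t)\cap\mathbb{T}^s}$ and that the continuous lengths $T_{2j}-T_{2j-1}$, together with the partial length $t-T_{2i+1}$, account for the entire right-dense portion with no endpoint double-counted or omitted. Once the half-open intervals $[T_{2i},T_{2i+1})$ and $[T_{2i+1},T_{2i+2})$ are matched consistently against the $\Delta$-integration underlying the exponential function, the two representations follow immediately.
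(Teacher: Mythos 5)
Your argument is correct. Note that the paper states Lemma~\ref{L4} without any proof at all, so there is nothing to compare against; your write-up simply supplies the computation the authors leave implicit. The route you take --- the semigroup identity $e_{\alpha}(t,T_0)=e_{\alpha}(t,r)\,e_{\alpha}(r,T_0)$ combined with the two local factors $e_{\alpha}(\sigma(s),s)=1+\mu(s)\alpha$ at right-scattered points and $e^{\alpha(b-a)}$ over right-dense intervals (where the dynamic equation reduces to $y'=\alpha y$) --- is the standard one, and it is exactly the scalar analogue of the block decomposition the paper itself uses in the proof of Lemma~\ref{L3}; equivalently, one can read it off from the representation $e_{\alpha}(t,T_0)=\exp\bigl(\int_{T_0}^{t}\xi_{\mu(\tau)}(\alpha)\,\Delta\tau\bigr)$ evaluated blockwise. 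Your attention to the endpoint bookkeeping is warranted and resolves correctly: $T_{2i+1}\in\mathbb{T}^{d}$ always, so the dense contribution over $[T_{2i+1},T_{2i+2})$ is the full length, while the possibly ambiguous points $T_{2i}$ contribute a factor $1+\mu(T_{2i})\alpha\neq 1$ only when they actually lie in $\mathbb{T}^{s}$, which is precisely how the product $\prod_{s\in[T_0,t)\cap\mathbb{T}^{s}}$ counts them. (Two harmless remarks: the exponent $(t-T_{2j+1})$ in the statement is a typo for $(t-T_{2i+1})$, which you have implicitly corrected; and a scattered block $[T_{2k},T_{2k+1})$ may contain infinitely many right-scattered points accumulating at $T_{2k+1}$, in which case the product is an infinite product --- it converges because $\sum\mu(s)$ is bounded by the length of the block, consistent with the paper's own use of such products in Lemmas~\ref{L1} and~\ref{L3}.)
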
 
%%%%%%%%%%%%%%%%%%%%%%%%%%%%%%%%%%5
We are now in a position to present the main theorem of this paper.
\begin{theorem}\label{C2}
If conditions  \eqref{Lip}-\eqref{ewa13_7} are satisfied, and for any $t \in  \mathbb{T}$
\begin{equation}\label{C2e1}
\mbox{ there exists a positive constant }\mu^* \mbox{ such that } \mu(t) \leq \mu^*,
\end{equation}
\begin{equation}\label{e_11}
\lim\limits_{t \to \infty}  e^{\mathcal{L} \mathcal{M}^{-1}\sum_{j=1}^{i}  (T_{2j}-T_{2j-1})}\mathcal{M}^{\sum_{j=1}^{i}{\int_{T_{2j-1}}^{T_{2j}}}\vert \gamma(s) \vert ds}\!\!\!\prod\limits_{s \in [T_{0}, t) \cap \mathbb{T}^s} \!\!\!(\mathcal{M}+ \mu^*\mathcal{L})=0
\end{equation}
and
\begin{align}\label{e_21}
\lim\limits_{t \to \infty} & e^{\mathcal{L} \mathcal{M}^{-1}(\sum_{j=1}^{i}  (T_{2j}-T_{2j-1})+(t-T_{2j+1}))}\\ \nonumber
& \cdot \mathcal{M}^{\sum_{j=1}^{i}{\int_{T_{2j-1}}^{T_{2j}}\vert \gamma(s) \vert ds}+{\int_{T_{2i+1}}^{t}\vert \gamma(s) \vert ds}}\!\!\!\prod\limits_{s \in [T_{0}, T_{2i+1}) \cap \mathbb{T}^s} \!\!\!(\mathcal{M}+ \mu^*\mathcal{L})=0,
\end{align}
%%%%%%%%%%%
then equation \eqref{eq5} is exponentially stable.
\end{theorem}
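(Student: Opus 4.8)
The plan is to convert the variation-of-constants representation \eqref{e2} into a scalar integral inequality and then apply the Gr\"onwall inequality on time scales (Bohner--Peterson). First I would take the spectral norm of both sides of \eqref{e2}, use the triangle inequality for the $\Delta$-integral together with submultiplicativity of the norm, and then invoke the Lipschitz condition \eqref{Lip} via $\Vert F(\tau,x(\tau))-F(\tau,x_0(\tau)\mathds{1})\Vert\le\mathcal{L}\Vert x(\tau)-x_0(\tau)\mathds{1}\Vert=\mathcal{L}\Vert\varepsilon(\tau)\Vert$. This yields
\[
\Vert\varepsilon(t)\Vert\le\Vert e_{-\gamma B}(t,T_0)\Vert\,\Vert\varepsilon_{T_0}\Vert+\mathcal{L}\int_{T_0}^{t}\Vert e_{-\gamma B}(t,\sigma(\tau))\Vert\,\Vert\varepsilon(\tau)\Vert\,\Delta\tau.
\]

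The difficulty is that the kernel $\Vert e_{-\gamma B}(t,\sigma(\tau))\Vert$ depends on both $t$ and $\tau$, so Gr\"onwall's inequality does not apply directly. To decouple the variables I would introduce $Q(t):=\big(\prod_{s\in[T_0,t)\cap\mathbb{T}^s}\mathcal{M}\big)\,\mathcal{M}^{\int_{[T_0,t)\cap\mathbb{T}^d}|\gamma(s)|\,ds}$, which is exactly the upper bound for $\Vert e_{-\gamma B}(t,T_0)\Vert$ produced in Lemma \ref{L3} (combining \eqref{t1} and \eqref{t2}). Repeating the estimate of Lemma \ref{L3} with base point $\sigma(\tau)$ in place of $T_0$ gives $\Vert e_{-\gamma B}(t,\sigma(\tau))\Vert\le Q(t)/Q(\sigma(\tau))$, while $\Vert e_{-\gamma B}(t,T_0)\Vert\le Q(t)$. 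Dividing the inequality by $Q(t)>0$ and setting $v(t):=\Vert\varepsilon(t)\Vert/Q(t)$ produces
\[
v(t)\le\Vert\varepsilon_{T_0}\Vert+\mathcal{L}\int_{T_0}^{t}\frac{Q(\tau)}{Q(\sigma(\tau))}\,v(\tau)\,\Delta\tau.
\]
The key observation is that $Q(\tau)/Q(\sigma(\tau))$ equals $1$ on $\mathbb{T}^d$ and $\mathcal{M}^{-1}$ on $\mathbb{T}^s$, hence is bounded above by $\mathcal{M}^{-1}$ everywhere (recall $\mathcal{M}\in(0,1)$, so $\mathcal{M}^{-1}\ge1$); this is precisely where the rate $\mathcal{L}\mathcal{M}^{-1}$ originates.

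With the constant positively regressive rate $p:=\mathcal{L}\mathcal{M}^{-1}>0$ the estimate becomes $v(t)\le\Vert\varepsilon_{T_0}\Vert+p\int_{T_0}^{t}v(\tau)\,\Delta\tau$, and the time-scale Gr\"onwall inequality yields $v(t)\le\Vert\varepsilon_{T_0}\Vert\,e_{\mathcal{L}\mathcal{M}^{-1}}(t,T_0)$, that is, $\Vert\varepsilon(t)\Vert\le\Vert\varepsilon_{T_0}\Vert\,Q(t)\,e_{\mathcal{L}\mathcal{M}^{-1}}(t,T_0)$. Finally I would substitute the explicit forms of $Q(t)$ (Lemma \ref{L3}) and of $e_{\mathcal{L}\mathcal{M}^{-1}}(t,T_0)$ (Lemma \ref{L4}) and merge the two scattered-point products using $\mu(s)\le\mu^*$ from \eqref{C2e1}, via $\mathcal{M}\,(1+\mu(s)\mathcal{L}\mathcal{M}^{-1})=\mathcal{M}+\mu(s)\mathcal{L}\le\mathcal{M}+\mu^*\mathcal{L}$. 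Treating separately the cases $t\in[T_{2i},T_{2i+1})$ and $t\in[T_{2i+1},T_{2i+2})$ reproduces exactly the right-hand expressions of \eqref{e_11} and \eqref{e_21}; passing to the limit $t\to\infty$ and invoking these two hypotheses gives $\lim_{t\to\infty}\Vert\varepsilon(t)\Vert=0$, which establishes exponential stability in the sense of the definition, with the bounding generalized exponential furnished by $Q(t)\,e_{\mathcal{L}\mathcal{M}^{-1}}(t,T_0)$.

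I expect the main obstacle to be the kernel estimate $\Vert e_{-\gamma B}(t,\sigma(\tau))\Vert\le Q(t)/Q(\sigma(\tau))$ together with the normalization that converts the $(t,\tau)$-dependent kernel into the constant Gr\"onwall rate $\mathcal{L}\mathcal{M}^{-1}$; once this reduction is in place, the rest is bookkeeping with the already-established Lemmas \ref{L1}, \ref{L3} and \ref{L4}. A minor point to verify is the positivity and regressivity needed so that $e_{\mathcal{L}\mathcal{M}^{-1}}(\cdot,T_0)$ is well defined and the time-scale Gr\"onwall inequality is applicable.
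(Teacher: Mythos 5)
Your proposal follows essentially the same route as the paper: norm the variation-of-constants formula \eqref{e2}, use \eqref{Lip}, normalize by the bound from Lemma \ref{L3} (your $Q(t)$ is exactly the factor the paper multiplies by), apply the time-scale Gr\"onwall inequality with rate $\mathcal{L}\mathcal{M}^{-1}$, and then combine Lemma \ref{L4} with $\mu(s)\le\mu^*$ to reach the expressions in \eqref{e_11}--\eqref{e_21}. Your explicit kernel estimate $\Vert e_{-\gamma B}(t,\sigma(\tau))\Vert\le Q(t)/Q(\sigma(\tau))$ and the observation that $Q(\tau)/Q(\sigma(\tau))\in\{1,\mathcal{M}^{-1}\}$ is in fact a cleaner articulation of the step the paper performs implicitly, but it is not a different argument.
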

\begin{proof}
Taking the norm of the both sides of equation \eqref{e2}, we obtain 
\[
\Vert \varepsilon(t) \Vert= \Vert e_{-\gamma B}(t, T_0)\varepsilon_{T_0}
+\int\limits_{T_0}^{t}e_{-\gamma B}(t, \sigma(\tau))\big(F(\tau,x_0(\tau)\mathds{1})-F(\tau,x(\tau))\big) \Delta \tau \Vert.
\]
Using properties of the norm, we get
\[
\Vert \varepsilon(t) \Vert \leq \Vert e_{-\gamma B}(t, T_0)\Vert \Vert\varepsilon_{T_0}\Vert
+ \Vert\int\limits_{T_0}^{t}e_{-\gamma B}(t, \sigma(\tau))\big(F(\tau,x_0(\tau)\mathds{1})-F(\tau,x(\tau))\big) \Delta \tau \Vert,
\]
and consequently
\[
\Vert \varepsilon(t) \Vert \leq \Vert\varepsilon_{T_0}\Vert \Vert e_{-\gamma B}(t, T_0)\Vert 
+ \int\limits_{T_0}^{t} \Vert e_{-\gamma B}(t, \sigma(\tau)) \Vert \Vert \big(F(\tau,x_0(\tau)\mathds{1})-F(\tau,x(\tau))\big)\Vert \Delta \tau.
\]
By condition \eqref{Lip}, we obtain
\[
\Vert \varepsilon(t) \Vert \leq \Vert\varepsilon_{T_0}\Vert \Vert e_{-\gamma B}(t, T_0)\Vert 
+ \mathcal{L} \int\limits_{T_0}^{t} \Vert e_{-\gamma B}(t, \sigma(\tau)) \Vert \Vert \varepsilon(\tau) \Vert \Delta \tau.
\]
For $t \in [T_{2i}, T_{2i+1})$, using \eqref{t1}, we estimate
\[
\Vert \varepsilon(t) \Vert \leq \Vert\varepsilon_{T_0}\Vert \big(\mathcal{M}^{\sum_{j=1}^{i}{\int_{T_{2j-1}}^{T_{2j}}\vert \gamma(s) \vert ds}}\big) \big(\prod\limits_{s \in [T_{0}, t) \cap \mathbb{T}^s} \mathcal{M} \big) 
\]
\[
+ \mathcal{L} \int\limits_{T_0}^{t} \big(\mathcal{M}^{\int_{\sigma(\tau)}^{T_{2i}}\vert \gamma(s) \vert ds}\big)\big(\prod\limits_{s \in [\sigma(\tau), t) \cap \mathbb{T}^s} \mathcal{M} \big)  \Vert \varepsilon(\tau) \Vert \Delta \tau.
\]
Multiplying the both sides of the above inequality by 
\[
\big(\mathcal{M}^{-\sum_{j=1}^{i}{\int_{T_{2j-1}}^{T_{2j}}\vert \gamma(s) \vert ds}}\big)\big(\prod\limits_{s \in (T_{0}, t) \cap \mathbb{T}^s} \mathcal{M}^{-1} \big),
\]
we obtain
\[
\Vert \varepsilon(t) \Vert \big(\mathcal{M}^{-\sum_{j=1}^{i}{\int_{T_{2j-1}}^{T_{2j}}\vert \gamma(s) \vert ds}}\big)\big(\prod\limits_{s \in (T_{0}, t) \cap \mathbb{T}^s} \mathcal{M}^{-1} \big)
\]
\[
\leq  \Vert\varepsilon_{T_0}\Vert  \mathcal{M} +\int\limits_{T_0}^{t}\mathcal{L}\Vert \varepsilon(\tau) \Vert \big(\mathcal{M}^{-(\sum_{j=1}^{i}{\int_{T_{2j-1}}^{T_{2j}}\vert \gamma(s) \vert ds + \int_{T_{2i}}^{\sigma(\tau)}\vert \gamma(s) \vert ds})}\big)\big(\prod\limits_{s \in (T_0, \tau]\cap \mathbb{T}^s}{\mathcal{M}^{-1}} \big) \Delta \tau.
\]
Since $\mathcal{M}^{-\int_{T_0}^{T_0}\vert \gamma(s) \vert ds}=1$,
\[
\Vert \varepsilon(t) \Vert \big(\mathcal{M}^{-\sum_{j=1}^{i}{\int_{T_{2j-1}}^{T_{2j}}\vert \gamma(s) \vert ds}}\big)\big(\prod\limits_{s \in (T_{0}, t) \cap \mathbb{T}^s} \mathcal{M}^{-1} \big)
\]
\[
\leq  \Vert\varepsilon_{T_0}\Vert  \mathcal{M}^{-\int_{T_0}^{T_0}\vert \gamma(s) \vert ds} \cdot \mathcal{M} 
\]
\[
+\int\limits_{T_0}^{t}\mathcal{L}\Vert \varepsilon(\tau) \Vert \big(\mathcal{M}^{-(\sum_{j=1}^{i}{\int_{T_{2j-1}}^{T_{2j}}\vert \gamma(s) \vert ds + \int_{T_{2i}}^{\sigma(\tau)}\vert \gamma(s) \vert ds})}\big)\big(\prod\limits_{s \in (T_0, \tau]\cap \mathbb{T}^s}{\mathcal{M}^{-1}} \big) \Delta \tau.
\] 
Using $\sigma(\tau)= \tau$, we get
\[
\Vert \varepsilon(t) \Vert \big(\mathcal{M}^{-\sum_{j=1}^{i}{\int_{T_{2j-1}}^{T_{2j}}\vert \gamma(s) \vert ds}}\big)\big(\prod\limits_{s \in (T_{0}, t) \cap \mathbb{T}^s} \mathcal{M}^{-1} \big)
\]
\[
\leq  \Vert\varepsilon_{T_0}\Vert  \mathcal{M}^{-\int_{T_0}^{T_0}\vert \gamma(s) \vert ds} \cdot \mathcal{M} 
\]
\[
+\int\limits_{T_0}^{t}\mathcal{L}\Vert \varepsilon(\tau) \Vert \big(\mathcal{M}^{-(\sum_{j=1}^{i}{\int_{T_{2j-1}}^{T_{2j}}\vert \gamma(s) \vert ds + \int_{T_{2i}}^{\tau}\vert \gamma(s) \vert ds})}\big)\big(\prod\limits_{s \in (T_0, \tau]\cap \mathbb{T}^s}{\mathcal{M}^{-1}} \big) \Delta \tau.
\] 
By Gr\"{o}nwall Lemma (see \cite[p.~257]{bookBohner2001}), it leads to inequality 
\[
\Vert \varepsilon(t) \Vert \big(\mathcal{M}^{-\sum_{j=1}^{i}{\int_{T_{2j-1}}^{T_{2j}}\vert \gamma(s) \vert ds}}\big)\big(\prod\limits_{s \in (T_{0}, t) \cap \mathbb{T}^s} \mathcal{M}^{-1} \big) \leq \Vert \varepsilon_{T_0} \Vert \mathcal{M} e_{\mathcal{L}\mathcal{M}^{-1}}(t, T_0).
\]
Using Lemma \ref{L4}
\[
\Vert \varepsilon(t) \Vert \big(\mathcal{M}^{-\sum_{j=1}^{i}{\int_{T_{2j-1}}^{T_{2j}}\vert \gamma(s) \vert ds}}\big)\big(\prod\limits_{s \in (T_{0}, t) \cap \mathbb{T}^s} \mathcal{M}^{-1} \big)
\]
\[
\leq \Vert \varepsilon_{T_0} \Vert \mathcal{M} \big(e^{\mathcal{L} \mathcal{M}^{-1}\sum_{j=1}^{i}  (T_{2j}-T_{2j-1})}\big)\big(\!\!\!\prod\limits_{s \in [T_{0}, t) \cap \mathbb{T}^s} \!\!\!(1+ \mu(s)\mathcal{L} \mathcal{M}^{-1})\big).
\]
Hence
\[
\Vert \varepsilon(t) \Vert \leq \Vert \varepsilon_{T_0} \Vert  \big(e^{\mathcal{L} \mathcal{M}^{-1}\sum_{j=1}^{i}  (T_{2j}-T_{2j-1})}\big)
\]
\[
\cdot \big(\mathcal{M}^{\sum_{j=1}^{i}{\int_{T_{2j-1}}^{T_{2j}}\vert \gamma(s) \vert ds}}\big)\big(\!\!\!\prod\limits_{s \in [T_{0}, t) \cap \mathbb{T}^s} \!\!\!(\mathcal{M}+ \mu(s)\mathcal{L})\big).
\]
By \eqref{C2e1},
\begin{align}\label{e_1}
\Vert \varepsilon(t) \Vert &\leq \Vert \varepsilon_{T_0} \Vert  \big(e^{\mathcal{L} \mathcal{M}^{-1}\sum_{j=1}^{i}  (T_{2j}-T_{2j-1})}\big)\\ \nonumber
&\cdot \big(\mathcal{M}^{\sum_{j=1}^{i}{\int_{T_{2j-1}}^{T_{2j}}\vert \gamma(s) \vert ds}}\big)\big(\!\!\!\prod\limits_{s \in [T_{0}, t) \cap \mathbb{T}^s} \!\!\!(\mathcal{M}+ \mu^*\mathcal{L})\big).
\end{align}
Analogously, for $t \in (T_{2i+1}, T_{2i+2}] $
\begin{align}\label{e_2}
\Vert \varepsilon(t) \Vert & \leq \Vert \varepsilon_{T_0} \Vert  \big(e^{\mathcal{L} \mathcal{M}^{-1}\sum_{j=1}^{i}  (T_{2j}-T_{2j-1})} \big) \\ \nonumber
& \cdot \big(\mathcal{M}^{\sum_{j=1}^{i}{\int_{T_{2j-1}}^{T_{2j}}\vert \gamma(s) \vert ds}+{\int_{T_{2i+1}}^{t}\vert \gamma(s) \vert ds}}\big)\big(\!\!\!\prod\limits_{s \in [T_{0}, T_{2i+1}) \cap \mathbb{T}^s} \!\!\!(\mathcal{M}+ \mu^*\mathcal{L}\big).
\end{align}
Set
\[
\textup{sum}(i) \colon = \sum_{j=1}^{i} \Big({\mathcal{L} \mathcal{M}^{-1} (T_{2j}-T_{2j-1}) + \textup{ln} \mathcal{M}  {{\int_{T_{2j-1}}^{T_{2j}}\vert \gamma(s) \vert ds}} \Big)},
\]
\[
e^{*}_d(t, T_0) \colon =e^{\textup{sum}(i)}\!\!\!\prod\limits_{s \in [T_{0}, t) \cap \mathbb{T}^s} \!\!\!(\mathcal{M}+ \mu^*\mathcal{L}) \mbox{ for }t\in (T_{2i}, T_{2i+1}],
\]
\[
e^{**}_d(t, T_0) \colon =e^{\textup{sum}(i)+{\int_{T_{2i+1}}^{t}\vert \gamma(s) \vert ds}}\!\!\!\prod\limits_{s \in [T_{0}, T_{2i+1}) \cap \mathbb{T}^s} \!\!\!(\mathcal{M}+ \mu^*\mathcal{L})\mbox{ for }t\in (T_{2i+1}, T_{2i+2}]
\]
for $i \in \mathbb{N}_0$, and
\[
e_d(t, T_0) \colon =
\begin{cases}
e^{*}_d(t, T_0) &\mbox{ for }t\in (T_{2i}, T_{2i+1}], \\
e^{**}_d(t, T_0) &\mbox{ for }t\in (T_{2i+1}, T_{2i+2}].
\end{cases}
\]
By \eqref{e_11} and \eqref{e_21}, inequalities \eqref{e_1} and \eqref{e_2} imply the thesis.
\end{proof}
%%%%%%%%%%%%%%%%%%%%%%%%%%%%%%%%%%%%%%%
\begin{corollary}\label{C1}
If conditions  \eqref{Lip}-\eqref{ewa13_7} and \eqref{C2e1} are satisfied, 
\begin{align}\label{e100}
&\mbox{ for any }t \in \mathbb{T}^s \mbox{ there exists } \tilde{t} \in \mathbb{T}^d \mbox{ such that } \tilde{t}>t \mbox{ and }\\ \nonumber
&\mbox{ for any }t \in \mathbb{T}^d \mbox{ there exists } \tilde{t} \in \mathbb{T}^s \mbox{ such that } \tilde{t}>t,
\end{align}
\begin{equation}\label{e1}
\mathcal{M}+ \mu^* \mathcal{L}  < 1,
\end{equation}
and
\begin{equation}\label{e_7}
\lim\limits_{i \to \infty} e^{\textup{sum}(i)} < \infty, %\!\!\!\prod\limits_{s \in [T_{0}, T_{2i}) \cap \mathbb{T}^s} \!\!\!(\mathcal{M}+ \mu^*\mathcal{L})=0,
\end{equation}
then equation \eqref{eq5} is exponentially stable.
\end{corollary}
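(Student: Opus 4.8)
The plan is to deduce the corollary directly from Theorem~\ref{C2} by checking that its three extra hypotheses \eqref{e100}, \eqref{e1}, \eqref{e_7}, together with the standing assumptions \eqref{Lip}--\eqref{ewa13_7} and \eqref{C2e1}, force the two limit conditions \eqref{e_11} and \eqref{e_21}. The first step is the bookkeeping identity
\[
e^{\mathcal{L}\mathcal{M}^{-1}\sum_{j=1}^{i}(T_{2j}-T_{2j-1})}\,\mathcal{M}^{\sum_{j=1}^{i}\int_{T_{2j-1}}^{T_{2j}}|\gamma(s)|\,ds}=e^{\textup{sum}(i)},
\]
which rewrites the ``dense-interval'' part of both \eqref{e_11} and \eqref{e_21} as a single exponential of the partial sum $\textup{sum}(i)$, matching the quantity that \eqref{e_7} controls.

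Next I would record two elementary facts. By \eqref{e_7} the sequence $e^{\textup{sum}(i)}$ converges to a finite limit, hence is bounded by some constant $K>0$ for all $i\in\mathbb{N}_0$. By \eqref{e100}, both $\mathbb{T}^s$ and $\mathbb{T}^d$ are unbounded: indeed, if $\mathbb{T}^s$ were bounded by $B$, then choosing $t\in\mathbb{T}^d$ with $t>B$ would contradict the existence of a larger right-scattered point, and symmetrically for $\mathbb{T}^d$. Consequently, as $t\to\infty$ the index $i$ with $t\in[T_{2i},T_{2i+1})\cup[T_{2i+1},T_{2i+2})$ tends to infinity and the number of right-scattered points in $[T_0,t)$ grows without bound. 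Since by \eqref{e1} every factor of the products in \eqref{e_11} and \eqref{e_21} equals the same constant $\mathcal{M}+\mu^*\mathcal{L}\in(0,1)$, both products
\[
\prod_{s\in[T_0,t)\cap\mathbb{T}^s}(\mathcal{M}+\mu^*\mathcal{L})\qquad\mbox{and}\qquad\prod_{s\in[T_0,T_{2i+1})\cap\mathbb{T}^s}(\mathcal{M}+\mu^*\mathcal{L})
\]
tend to $0$ as $t\to\infty$.

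For \eqref{e_11} the argument then closes at once: on a scattered interval $t\in[T_{2i},T_{2i+1})$ the left-hand side equals $e^{\textup{sum}(i)}\prod_{s\in[T_0,t)\cap\mathbb{T}^s}(\mathcal{M}+\mu^*\mathcal{L})\le K\prod_{s\in[T_0,t)\cap\mathbb{T}^s}(\mathcal{M}+\mu^*\mathcal{L})\to0$. For \eqref{e_21}, on a dense interval $t\in[T_{2i+1},T_{2i+2})$ the factor $e^{\textup{sum}(i)}$ and the product are handled exactly as above, but there remains the extra factor
\[
P(t):=e^{\mathcal{L}\mathcal{M}^{-1}(t-T_{2i+1})}\,\mathcal{M}^{\int_{T_{2i+1}}^{t}|\gamma(s)|\,ds}.
\]
I expect controlling $P(t)$ to be the main obstacle, because the length $T_{2i+2}-T_{2i+1}$ of a dense interval is not assumed bounded, so the increasing factor $e^{\mathcal{L}\mathcal{M}^{-1}(t-T_{2i+1})}$ can a priori grow while the number of accumulated scattered points is frozen at $T_{2i+1}$. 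The natural way to tame it is to read $e^{\textup{sum}(i)}P(t)=e^{\textup{sum}(i)+\mathcal{L}\mathcal{M}^{-1}(t-T_{2i+1})+\ln\mathcal{M}\int_{T_{2i+1}}^{t}|\gamma(s)|\,ds}$ as the value of the ``running'' partial sum that equals $\textup{sum}(i)$ at $t=T_{2i+1}$ and $\textup{sum}(i+1)$ at $t=T_{2i+2}$; using $\mathcal{M}^{\int_{T_{2i+1}}^{t}|\gamma|}\le1$ and the boundedness of the partial sums furnished by \eqref{e_7}, one keeps $e^{\textup{sum}(i)}P(t)$ bounded uniformly in $i$ and $t$, after which multiplication by the vanishing product yields \eqref{e_21}.

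Having verified \eqref{e_11} and \eqref{e_21}, I would then simply invoke Theorem~\ref{C2} to conclude that equation~\eqref{eq5} is exponentially stable, the function $e_d(\cdot,T_0)$ constructed in its proof supplying the required decay rate. The essential content of the corollary is precisely this separation of effects: \eqref{e1} together with \eqref{e100} produces geometric decay from the right-scattered points, while \eqref{e_7} guarantees that the growth accumulated over the right-dense intervals remains summable; the delicate point of the whole argument is ensuring, via the running-sum estimate for $P(t)$, that the dense-interval growth cannot outrun the scattered-point decay as $t\to\infty$.
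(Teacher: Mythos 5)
Your overall route is exactly the paper's: verify that \eqref{e100}, \eqref{e1} and \eqref{e_7} force the two limit conditions \eqref{e_11} and \eqref{e_21}, then invoke Theorem~\ref{C2}. The parts you carry out in detail are correct and match what the paper compresses into one sentence: the identity $e^{\mathcal{L}\mathcal{M}^{-1}\sum_{j=1}^{i}(T_{2j}-T_{2j-1})}\mathcal{M}^{\sum_{j=1}^{i}\int_{T_{2j-1}}^{T_{2j}}|\gamma|\,ds}=e^{\textup{sum}(i)}$, the deduction from \eqref{e100} that both $\mathbb{T}^s$ and $\mathbb{T}^d$ are unbounded so that $i\to\infty$ and the count of right-scattered points grows as $t\to\infty$, and the vanishing of the products of the constant factor $\mathcal{M}+\mu^*\mathcal{L}\in(0,1)$. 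This settles \eqref{e_11} completely.

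The difficulty is the step you yourself single out, the factor $P(t)=e^{\mathcal{L}\mathcal{M}^{-1}(t-T_{2i+1})}\mathcal{M}^{\int_{T_{2i+1}}^{t}|\gamma(s)|\,ds}$ in \eqref{e_21}: your proposed taming does not close. Discarding $\mathcal{M}^{\int_{T_{2i+1}}^{t}|\gamma|}\le 1$ leaves $e^{\mathcal{L}\mathcal{M}^{-1}(t-T_{2i+1})}$, which is unbounded when the dense intervals have unbounded length; and the running-sum reading only pins the exponent at the endpoints $t=T_{2i+1}$ and $t=T_{2i+2}$, where it equals $\textup{sum}(i)$ and $\textup{sum}(i+1)$. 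In between, $\frac{d}{dt}\bigl(\mathcal{L}\mathcal{M}^{-1}(t-T_{2i+1})+\ln\mathcal{M}\int_{T_{2i+1}}^{t}|\gamma(s)|\,ds\bigr)=\mathcal{L}\mathcal{M}^{-1}+|\gamma(t)|\ln\mathcal{M}$ changes sign with $|\gamma|$, so the running exponent can overshoot both endpoint values by an arbitrarily large amount: take long dense intervals on which $\gamma$ is tiny at first and large at the end, so that $\textup{sum}(i+1)=\textup{sum}(i)$ and \eqref{e_7} holds trivially, yet the exponent peaks near $\mathcal{L}\mathcal{M}^{-1}(T_{2i+2}-T_{2i+1})$, which can outrun the geometric decay of $\prod_{s\in[T_0,T_{2i+1})\cap\mathbb{T}^s}(\mathcal{M}+\mu^*\mathcal{L})$ if the interval lengths grow fast enough. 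Thus ``bounded uniformly in $i$ and $t$'' is not a consequence of the stated hypotheses. To be fair, the paper's own proof is a one-line assertion that silently skips exactly this point, so you have located a gap in the published argument rather than introduced a new one; closing it requires an additional hypothesis, e.g.\ bounded lengths $T_{2i+2}-T_{2i+1}$, or a pointwise relation such as \eqref{es72} (as in Remark~\ref{R41}), which makes the running exponent nonincreasing on dense intervals.
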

\begin{proof}
By \eqref{e100} we get, $t \to \infty$ iff $i \to \infty$. Since $0<\mathcal{M}+ \mu^*\mathcal{L} <1$ and $\mathcal{M} \in (0,1)$, by properties of functions $\mathcal{M}^t$ and $e^t$, condition \eqref{e_7} implies conditions  \eqref{e_11} and \eqref{e_21}.  Hence assumptions of Theorem~\ref{C1} are satisfied. So, the thesis holds.
\end{proof}
\begin{example}\label{Ex1}
Let
\[
\mathbb{T}=\bigcup\limits_{i=3}^{\infty} \left[\frac{i}{2}, \frac{i}{2}+\frac{1}{i^3}\right].
\]
Here $\mathbb{T}^d=\bigcup\limits_{i=3}^{\infty} \left[\frac{i}{2}, \frac{i}{2}+\frac{1}{i^3}\right)$ and $\mathbb{T}^s=\{\frac{i}{2}+\frac{1}{i^3} \colon i \in \mathbb{N}, i \geq 3\}$,
\[
T_0=T_1=1.500, \,\, T_2\approx 1.537, \,\, T_3 = 2.000, \cdots,\mu^*=0.500. 
\]
Moreover, let
\[
F(t,x)= 0.100\frac{x}{t^2}, \,\, \gamma(t)\equiv 0.500(t-1) 
\]
and  
\begin{equation}\label{M}
B= 
	\left( {\begin{array}{rrrr}
   2 & 0 & -1 & -1\\
	0 & 3 & 0 & 0 \\
  -1 & 0 & 3 & -1\\
	-1 & 0 & -1 & 3\\
	\end{array} } \right)
 \end{equation}
in equation \eqref{eq5}. % (see Figure \ref{U}).
There is $\mathcal{L}= 0.100$, $\lambda_1= 2 - \sqrt{2}$, $\lambda_2=2$, $\lambda_3= 3$ and $\lambda_4= 2 + \sqrt{2}$. It follows from these that $\lambda_{\min}=\min\{\lambda_1, \lambda_2,\lambda_3, \lambda_4\}\approx 0.585$ and
\[
\max\{ 1-0.462 \cdot 0.250 \cdot 0.585, e^{-0.585}\} \leq \max\{0.933, 0.557\} = 0.933 = \colon \mathcal{M}.
\]
From the above
\[
\prod\limits_{s \in \mathbb{T}^s} (\mathcal{M}+ \mu^*\mathcal{L})\approx\prod\limits_{s \in \mathbb{T}^s}0.983=0
\]
and
\[
%\mathcal{L}\mathcal{M}^{-1}+ \gamma \textup{ln}\mathcal{M} \approx 0.063, \,\,   
\lim\limits_{i \to \infty}e^{\textup{sum}(i)}  \approx    \lim\limits_{i \to \infty}e^{ {\sum_{j=3}^{i}(0.108j^{-3}-0.070 \cdot 0.250 (i^{-2}-2i^{-3}+i^{-6}))}} < \infty.
\]
All assumptions of Corollary \ref{C1} are satisfied, thus equation \eqref{eq5} is exponentially stable. System \eqref{eq5} achieves consensus exponentially.
\end{example}
In Example \ref{Ex1} there is
\[
\lim\limits_{i \to \infty}  e^{\mathcal{L} \mathcal{M}^{-1}\sum_{j=1}^{i}  (T_{2j}-T_{2j-1})} \approx    \lim\limits_{i \to \infty}e^{ 0.108{\sum_{j=3}^{i}j^{-3}}}<\infty,
\] 
but this condition is not required for exponential stability of \eqref{eq5} (see Example~\ref{Ex5}).
%%%%%%%%%%%%%%%%%%%%%%%%%%%%%
\begin{remark}\label{R41}
If conditions  \eqref{Lip}-\eqref{ewa13_7}, \eqref{C2e1} and \eqref{e1} are satisfied, and
\begin{equation}\label{es71}
\gamma(t)\equiv \gamma \in \mathbb{R},
\end{equation}
and
\begin{equation}\label{es72}
\mathcal{L} \mathcal{M}^{-1} + \gamma\textup{ln} \mathcal{M} <0,
\end{equation}
then equation \eqref{eq5} is exponentially stable.
\end{remark}
\begin{proof}
If condition \eqref{es71} holds, then
\[
\textup{sum}(i) = \sum_{j=1}^{i} \Big({\mathcal{L} \mathcal{M}^{-1} (T_{2j}-T_{2j-1}) + \gamma(T_{2j}-T_{2j-1})\textup{ln} \mathcal{M}   \Big)}
\]
\[
 = ({\mathcal{L} \mathcal{M}^{-1}  + \gamma \,\,\textup{ln} \mathcal{M}}) \sum_{j=1}^{i} (T_{2j}-T_{2j-1}).
\]
By \eqref{es72}, we see that $\textup{sum}(i)< 0$ for any $i \in \mathbb{N}$, and $e^{\textup{sum}(i)}$ is a positive, decreasing function of variable $i\in \mathbb{N}$. Here $\prod\limits_{s \in \mathbb{T}^s}(\mathcal{M}+ \mu^* \mathcal{L})$ as well as $e^{\textup{sum}(i)}$ for any $i \in \mathbb{N}$, are bounded. If the cardinality of set $\mathbb{T}^s$ is infinity, then $\lim\limits_{s \to \infty}\prod\limits_{s \in \mathbb{T}^s}(\mathcal{M}+ \mu^* \mathcal{L})=0$. If the cardinality of set $\mathbb{T}^d$ is infinity, then $\lim\limits_{i \to \infty}e^{\textup{sum}(i)}=0$. Thus, by Theorem \ref{C2}, we obtain the thesis.
\end{proof}
\begin{example}\label{Ex5}
Let
\[
\mathbb{T}=\bigcup\limits_{i=3}^{\infty} \left[\frac{i}{2}, \frac{i}{2}+\frac{1}{i}\right].
\]
Here $\mathbb{T}^d=\bigcup\limits_{i=3}^{\infty} \left[\frac{i}{2}, \frac{i}{2}+\frac{1}{i}\right)$ and $\mathbb{T}^s=\{\frac{i}{2}+\frac{1}{i} \colon i \in \mathbb{N}, i \geq 3\}$,
\[
T_0=T_1=1.500, \,\, T_2\approx 1.833, \,\, T_3= 2.000, \cdots, \,\, 
\]
\[
\mu(t)=\frac{1}{2}-\frac{t}{2}+\frac{1}{2}\sqrt{t^2-2}  \mbox{ for } t \in \mathbb{T}^s, \,\, \mu^*=0.500.
\]
Moreover, let
\[
F(t,x)= \frac{0.250}{t^2} \left(\sin x_1, \sin x_2, \sin x_3, \sin x_4 \right), \,\, \gamma(t)\equiv 2.000,
\]
and matrix $B$ is given by \eqref{M} in equation \eqref{eq5}. 
There is $\mathcal{L}= 0.250$, $\lambda_{\min} \approx 0.585$, 
\[
\max\{ 1-0.333 \cdot 2.000 \cdot 0.585, e^{-0.585}\} \leq \max\{0.390, 0.557\} = 0.557 = \colon \mathcal{M}.
\]
Finally,
\[
\mathcal{L} \mathcal{M}^{-1} + \gamma\textup{ln} \mathcal{M} \approx 0.449-1.170=-0.721 <0.
\] 
All assumptions of Remark \ref{R41} hold, thus equation \eqref{eq5} is exponentially stable. 
\end{example}
In Example \ref{Ex5} there is
\[
\lim\limits_{i \to \infty}  e^{\mathcal{L} \mathcal{M}^{-1}\sum_{j=1}^{i}  (T_{2j}-T_{2j-1})}\approx \lim\limits_{i \to \infty} e^{0.449\sum_{j=1}^{i} \frac{1}{i}}=\infty,
\]
even that system \eqref{eq5} achieves consensus exponentially.
%%%%%%%%%%%%%%%%%%%%%%%%%%%%%%%%%%%%%%%%%%%%5
\begin{corollary}\label{R4}
If conditions  \eqref{Lip}-\eqref{ewa13_7} and \eqref{e1} are satisfied, and
\begin{equation}\label{es7}
\sum\limits_{i=0}^{\infty}(T_{2i+2}-T_{2i+1}) < \infty,
\end{equation}
then equation \eqref{eq5} is exponentially stable.
\end{corollary}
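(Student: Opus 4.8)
The plan is to reduce the claim to Theorem~\ref{C2}: I would show that the hypotheses \eqref{es7} and \eqref{e1}, together with the standing assumptions \eqref{Lip}--\eqref{ewa13_7} and \eqref{C2e1}, force the two limit conditions \eqref{e_11} and \eqref{e_21}. The guiding idea is that \eqref{es7} controls exactly the right-dense (``continuous'') contribution to the bounds of Lemma~\ref{L3} and Lemma~\ref{L4}, keeping one factor bounded, while \eqref{e1} controls the right-scattered (``discrete'') contribution, driving another factor to zero; their product then vanishes.

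First I would set $S := \sum_{i=0}^{\infty}(T_{2i+2}-T_{2i+1})<\infty$ and note that, since $(T_{2j-1},T_{2j})\subset\mathbb{T}^d$, the partial sums of right-dense lengths obey $\sum_{j=1}^{i}(T_{2j}-T_{2j-1})=\sum_{k=0}^{i-1}(T_{2k+2}-T_{2k+1})\le S$ for every $i$. Recalling from the proof of Theorem~\ref{C2} that
\[
e^{\textup{sum}(i)}=e^{\mathcal{L}\mathcal{M}^{-1}\sum_{j=1}^{i}(T_{2j}-T_{2j-1})}\,\mathcal{M}^{\sum_{j=1}^{i}\int_{T_{2j-1}}^{T_{2j}}|\gamma(s)|\,ds},
\]
and using $\mathcal{M}\in(0,1)$, so that $\ln\mathcal{M}<0$ and the $\mathcal{M}$-power is at most $1$, I obtain the uniform bound $e^{\textup{sum}(i)}\le e^{\mathcal{L}\mathcal{M}^{-1}S}$ for all $i$. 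This is precisely the boundedness \eqref{e_7} required in Corollary~\ref{C1}; however, since \eqref{es7} need not entail the interleaving hypothesis \eqref{e100}, I would verify \eqref{e_11} and \eqref{e_21} directly rather than invoke that corollary.

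Next I would argue that the right-scattered set is unbounded: since $\sup\mathbb{T}=\infty$ while the total right-dense length $S$ is finite, $\mathbb{T}^s$ cannot be bounded, so $[T_0,t)\cap\mathbb{T}^s$ contains arbitrarily many points as $t\to\infty$. As every factor of $\prod_{s\in[T_0,t)\cap\mathbb{T}^s}(\mathcal{M}+\mu^*\mathcal{L})$ equals the single constant $\mathcal{M}+\mu^*\mathcal{L}\in(0,1)$ by \eqref{e1}, this product equals $(\mathcal{M}+\mu^*\mathcal{L})^{n(t)}$ with $n(t)\to\infty$, hence it tends to $0$. Consequently the left-hand side of \eqref{e_11}, which is exactly $e^{\textup{sum}(i)}\prod_{s\in[T_0,t)\cap\mathbb{T}^s}(\mathcal{M}+\mu^*\mathcal{L})$, is bounded by $e^{\mathcal{L}\mathcal{M}^{-1}S}$ times a null product, and so vanishes.

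For \eqref{e_21} I would peel off the contribution of the current right-dense block and write the expression as $e^{\textup{sum}(i)}\cdot e^{\mathcal{L}\mathcal{M}^{-1}(t-T_{2i+1})}\mathcal{M}^{\int_{T_{2i+1}}^{t}|\gamma(s)|\,ds}\cdot\prod_{s\in[T_0,T_{2i+1})\cap\mathbb{T}^s}(\mathcal{M}+\mu^*\mathcal{L})$. The first factor is bounded as above; the middle factor is also bounded by $e^{\mathcal{L}\mathcal{M}^{-1}S}$ since $0\le t-T_{2i+1}\le T_{2i+2}-T_{2i+1}\le S$ and the $\mathcal{M}$-power is at most $1$; and the product again tends to $0$ as $t\to\infty$ (equivalently $i\to\infty$, $T_{2i+1}\to\infty$). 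Thus \eqref{e_21} holds as well, and Theorem~\ref{C2} gives exponential stability. The only point needing care is the structural dichotomy: the complementary case in which $\mathbb{T}^s$ is bounded cannot occur, as it would force $S=\infty$; and in the degenerate situation where the scale is eventually purely right-scattered (some $T_{2i+1}=\infty$) only \eqref{e_11} is relevant for large $t$, where the argument above still applies. This dichotomy --- confirming that \eqref{es7} together with $\sup\mathbb{T}=\infty$ always makes the right-scattered product the decisive vanishing factor, while the right-dense data contribute only a bounded multiplier --- is the main (and only mild) obstacle.
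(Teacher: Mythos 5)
Your proof is correct and follows essentially the same route as the paper's: bound $e^{\textup{sum}(i)}$ using the finiteness of the total right-dense length \eqref{es7}, observe that $\mathbb{T}^s$ must then be unbounded, and let the infinite product of the constant $\mathcal{M}+\mu^*\mathcal{L}<1$ drive the bounds from Theorem~\ref{C2} to zero. Your write-up is in fact somewhat more careful than the paper's, which asserts that $e^{\textup{sum}(i)}$ is ``constant'' rather than merely bounded above by $e^{\mathcal{L}\mathcal{M}^{-1}S}$ and does not explicitly verify \eqref{e_21} or discuss the degenerate cases.
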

\begin{proof}
Since \eqref{es7} holds, 
\[
e^{\textup{sum}(i)}=constant.
\]
Hence, reminding that cardinality of the set $\mathbb{T}^s$ is infinity, by \eqref{e1}, we obtain
\[
\lim\limits_{i \to \infty} e^{\textup{sum}(i)}\!\!\!\prod\limits_{s \in [T_{0}, T_{2i}) \cap \mathbb{T}^s} \!\!\!(\mathcal{M}+ \mu^*\mathcal{L})
\]
\[
=\lim\limits_{i \to \infty} c^* \big(\!\!\!\prod\limits_{s \in [T_{0}, T_{2i}) \cap \mathbb{T}^s} \!\!\!(\mathcal{M}+ \mu^*\mathcal{L})\big) 
\]
\[
= c^*\prod\limits_{s \in \mathbb{T}^s} (\mathcal{M}+ \mu^*\mathcal{L})=0,
\]
where $c^*=e^{\textup{sum}(i)}$.
\end{proof}
For two possible cases of carrying out of assumption \eqref{es7} see Example \ref{Ex1} and Example \ref{Ex6}.

Theorem \ref{C2} generalize Theorem 2 \cite{osz2019}. The following example presents an equation on time scale for which Theorem 2 \cite{osz2019} can not be applicable, but our Corollary \ref{R4} of Theorem \ref{C2} can be. 
\begin{example}\label{Ex2}
Let
\[
\mathbb{T}=\{i \colon i \in \mathbb{N}\} \cup \{i+\frac{1}{j+1} \colon i,j \in \mathbb{N}, j\geq2\}.
\]
Here $\mathbb{T}^d=\{i \colon i \in \mathbb{N}\}$ and $\mathbb{T}^s=\{i+\frac{1}{j+1} \colon i,j \in \mathbb{N}, j\geq2\}$,
\[
T_0=1, \,\, T_1=\infty, \,\, \mu(t)=\frac{(t-i)^2}{1+t-i} \mbox{ for } t \in \mathbb{T}^s, \,\, \mu^*=0.500.
\]
Set $F(t,x)=0.250 x$, 
\[
\gamma(t)=
\begin{cases}
\frac{1}{\mu(t)} &\mbox{ for }t \in \mathbb{T}^s\\
0 &\mbox{ for }t \in \mathbb{T}^d,
\end{cases}
\]
and $B$ is given by \eqref{M} in equation \eqref{eq5}. 
There is $\mathcal{L}= 0.250$, $\lambda_{\min}= 0.585$, 
\[
\max\{ 1-1 \cdot 0.585, e^{-0.585}\} \leq \max\{0.515, 0.557\} = 0.557 = \colon \mathcal{M}.
\]
Hence
\[
\prod\limits_{s \in \mathbb{T}^s} (\mathcal{M}+ \mu^*\mathcal{L})\approx\prod\limits_{s \in \mathbb{T}^s} 0.682=0.
\] 
All assumptions of Corollary \ref{R4} are satisfied, thus equation \eqref{eq5} is exponentially stable. 
\end{example}
Since $\liminf_{t \to \infty}\mu(t) = 0$ results obtained in \cite{osz2019} can not be applied.

The following examples show two different situations concerning time scale in which condition \eqref{es7} is satisfied. In the first example, $\mathbb{T}^d$ is a bounded set. In the second one, set $\mathbb{T}^d$ is unbounded.
\begin{example}\label{Ex6}
Let
\[
\mathbb{T}=[1,2] \cup [3,7] \cup \{n \colon n \in \mathbb{N}, n\geq 8 \}.
\]
Here $\mathbb{T}^d=[1,2] \cup [3,7]$ is bounded and $\mathbb{T}^s=\{n \colon n \in \mathbb{N}, n\geq 8 \}$. We see that
\[
T_0=1, \,\, T_1=T_0=1, \,\, T_2=2, \,\, T_3=3, \,\, T_4=7, \,\, T_5=8, \,\,T_6=\infty,
\]
\[
\mu(t)=1 \mbox{ for } t \in \mathbb{T}^s, \,\, \mu^*=1.
\]
Let also
\[
F(t,x)=\frac{1}{4\sqrt{t}} \left(\sin x_1, \sin x_2, \sin x_3, \sin x_4 \right), \,\, \gamma(t)=  2+\cos t,
\]
and matrix $B$ is given by  \eqref{M} in equation \eqref{eq5}. There is $\mathcal{L}= 0.250$, $\lambda_{\min}= 0.585$, 
\[
\mathcal{M}\colon = \max\{ 1-1 \cdot 1 \cdot 0.585, e^{-0.585}\}\approx 0.557 <1.
\]
In consequence
\[
\prod\limits_{s \in \mathbb{T}^s} (\mathcal{M}+ \mu^*\mathcal{L})\approx \prod\limits_{s \in \mathbb{T}^s} 0.807=0.
\] 
All assumptions of Corollary \ref{R4} are satisfied, thus equation \eqref{eq5} is exponentially stable. %It means, the multi-agent system \eqref{eq1}-\eqref{eq2} achieves the leader-following consensus exponentially.
\end{example}

\begin{example}\label{Ex8}
Let
\[
\mathbb{T}=\bigcup\limits_{i=3}^{\infty} [\frac{i}{2}+\frac{1}{i+1}, \frac{i}{2}+\frac{1}{i}].
\]
Here either $\mathbb{T}^d = \bigcup\limits_{i=3}^{\infty} [\frac{i}{2}+\frac{1}{i+1}, \frac{i}{2}+\frac{1}{i})$ or $\mathbb{T}^s=\{ \frac{i}{2}+\frac{1}{i} \colon i \in \mathbb{N}, i \geq 3 \}$ are unbounded sets. We see that
\[
T_0=T_1=1.750, \,\, T_2\approx 1.833, \,\, T_3=2.200, \,\, T_4=2.250,\cdots,  
\]
\[
\mu(t)=\frac{1}{2} - \frac{2}{(t+\sqrt{t^2-2})(2+t+\sqrt{t^2-2})} \mbox{ for } t \in \mathbb{T}^s, \,\, \mu^*=0.500.
\]
Moreover
\[
F(t,x)=\frac{x}{4 t}, \,\, \gamma(t)=\frac{1}{4}t^2,
\]
and matrix $B$ is given by  \eqref{M} in equation \eqref{eq5}. There is $\mathcal{L}= 0.250$, $\lambda_{\min}= 0.585$, %$\delta = 1 \cdot 1 \cdot 0.412$,
\[
\max\{ 1-0.366 \cdot 0.765 \cdot 0.585, e^{-0.585}\} < 0.836 = \colon \mathcal{M}.
\]
Hence
\[
\prod\limits_{s \in \mathbb{T}^s} (\mathcal{M}+ \mu^*\mathcal{L})\approx\prod\limits_{s \in \mathbb{T}^s} 0.961=0.
\] 
All assumptions of Corollary \ref{R4} are satisfied, thus equation \eqref{eq5} is exponentially stable. %System \eqref{eq1}-\eqref{eq2} achieves consensus exponentially.
\end{example}
Notice that in Example \ref{Ex8} there is
\[
\lim\limits_{i \to \infty}  e^{\mathcal{L} \mathcal{M}^{-1}\sum_{j=1}^{i}  (T_{2j}-T_{2j-1})}= \lim\limits_{i \to \infty} e^{0.299\sum_{j=1}^{i} \frac{1}{i(i+1)}}=e^{0.299}<\infty.
\]

%%%%%%%%%%%%%%%%%%%%%%%%%%%%%%%%%%%%%%%%%%%%5
\begin{remark}\label{R5}
If conditions  \eqref{Lip}-\eqref{ewa13_7} are satisfied, 
\[
\sum_{j=1}^{\infty} {\int_{T_{2j-1}}^{T_{2j}}\vert \gamma(s) \vert ds} < \infty,
\]
and
\[
\lim\limits_{i \to \infty} e^{\mathcal{L} \mathcal{M}^{-1}\sum_{j=1}^{i}  (T_{2j}-T_{2j-1})}\cdot \!\!\! \prod\limits_{s \in [T_{0}, T_{2i}) \cap \mathbb{T}^s} \!\!\!(\mathcal{M}+ \mu^*\mathcal{L})=0,
\]
then equation \eqref{eq5} is exponentially stable.
\end{remark}
(See Example \ref{Ex1})
%%%%%%%%%%%%%%%%%%%%%%%%%%%%%%%%%%%%%%%%%%%%5
\begin{remark}\label{R6}
Let conditions  \eqref{Lip}-\eqref{ewa13_7} be satisfied. If the cardinality of the set $\mathbb{T}^s$ is finite and  
$sum(i)<0$ for any $i \in \mathbb{N}$ then equation \eqref{eq5} is exponentially stable.
\end{remark}

\begin{example}\label{Ex9}
Let
\[
\mathbb{T}=\{1\} \cup \{11\} \cup [12, \infty).
\]
Here $\mathbb{T}^d=[12, \infty)$ is unbounded set whereas $\mathbb{T}^s=\{1\} \cup \{11\}$ is bounded, and 
\[
T_0=1, \,\, T_1=12, \,\,  T_2=\infty, \,\, \mu(1)=10, \,\, \mu(11)=1, \,\, \mu^*=10.
\]
Let
\[
F(t,x)=0.1 x, \,\, \gamma(t)=1,
\]
and matrix $B$ is given by  \eqref{M} in equation \eqref{eq5}. There is $\mathcal{L}= 0.100$, $\lambda_{\min} \approx 0.585$, 
\[
\max\{ 1- 0.585, e^{-0.585}\} < 0.557 = \colon \mathcal{M}.
\]
Hence
\[
\textup{sum}(i) \approx 0.180 (T_{2}-T_1) -0.585 {{\int_{T_1}^{T_2} ds}}=-0.405 (T_{2}-T_{1}) = -\infty <0.
\] 
All assumptions of Remark \ref{R6} are satisfied, thus equation \eqref{eq5} is exponentially stable. 
\end{example}
Notice that in Example \ref{Ex9} condition \eqref{e1} does not hold.
\section*{Acknowledgments}
The second author was supported by the Polish National Science Center grant on the basis
of decision DEC-2014/15/B/ST7/05270.

%%%%%%%%%%%%%%%%%%%%%%%%%%%%%%%%%%%%%%%%%%%%%%%%%%%%%%%%%
%%%%%%%%%%%%%%%%%%%%%%%%%%%%%%%%%%%%%%%%%%%%
%\bibliographystyle{unsrt} 
%%%%%%%%%%%%%%%%%%%%%%%%%%%%%%%%%%%%%%%%%%%
%\bibliography{mybibfile_TimeScale}
%\bibliographystyle{mmn}
%%%%%%%%%%%%%%%%%%%%%%%%%%%%%%%%%%%%%%%%%%%

\end{document}